\newcommand{\cc}{c} 
\newcommand{\comporesult}{d} 
\newcommand{\affcu}{\mu} 
\newcommand{\const}{\operatorname{const}}
\newcommand{\lk}[1]{\bar #1} 
\newcommand{\rp}[1]{R_{#1}}
\newcommand{\pp}[1]{P_{#1}^{(0)}}
\newcommand{\qp}[1]{P_{#1}^{(1)}}
\newcommand{\ppp}[2]{P_{#1}^{(#2)}}
\newcommand{\gcvatp}{\gamma} 
\newcommand{\ipt}{p} 
\newcommand{\kidx}{k} 
\newcommand{\atp}[1]{\left. #1 \right|_\ipt}
\newcommand{\fpre}{\varphi}
\newcommand{\gpre}{\psi}
\newcommand{\R}{\mathbb{R}} \newcommand{\Z}{\mathbb{Z}}
 \newcommand{\N}{\mathbb{N}}
\newcommand{\NZ}{\mathbb{N}_0}
\theoremstyle{plain} 
\newtheorem{theorem}{Theorem}
\newtheorem{lemma}[theorem]{Lemma}
\newtheorem{corollary}[theorem]{Corollary}
\theoremstyle{remark}
\newtheorem*{remark*}{Remark}
\begin{document}
\title{An affine characterization of quadratic curves}
\author{Thomas Binder}
\address{University of L{\"u}beck,
Institute of Mathematics, Wallstra{\ss}e 40,
D-23560 L{\"u}beck, Germany}
\email{thmsbinder@gmail.com}

\begin{abstract}
By analyzing the affine Taylor expansion of a non-degenerate
plane curve, we obtain characterizations of classes of such curves
via curvature properties of the gravity curve. The proof is based on
an analysis of the degree parity and leading coefficients
of polynomials occurring in the expansion.
\end{abstract}

\keywords{
Plane curve; gravity curve; affine curvature; Taylor expansion.
}
\subjclass[2000]{53A15.}
\maketitle

By
$\R^2$ we denote the real affine plane.
Let $\cc:I\to\R^2$, where $I\subseteq \R$ is an open interval,
be a differentiable plane affine curve.
We assume that $\cc$ is non-degenerate and parametrized
by affine arclength $s\in I$.

For fixed $p\in I$ the gravity curve $\gcvatp$ of $\cc$ at
$p$ is constructed as follows.
Consider the line $\ell$ obtained by parallel translation
of $\cc'(p)$ by $\delta>0$, in the direction of the halfplane
containing $\cc(I)$. Since $\cc$ is strongly convex at $p$, $\ell$
intersects $\cc(I)$ in exactly two points which define a line
segment in $\ell$. The midpoint of this line segment lies on the gravity
curve $\gcvatp$. When $\delta$ varies these midpoints sweep out all
of the differentiable curve $\gcvatp$,
where $\gcvatp(0)=\cc(p)$ by continuity.
As our considerations are local, we make restrictions to suitable
subsets without mentioning this each time.

Affine differential geometry is motivated by the fact that,
in a generalization to locally strongly convex surfaces and
hypersurfaces, the direction of
$\gcvatp'(0)$ coincides with the direction of the affine normal,
cf.\ \cite{Bla23} and \cite{Lei89}.
Wang \cite{BS00} asked for a classification of locally strongly convex
hypersurfaces having gravity curves which are straight lines.
A weaker condition is the following:
the gravity curve $\gcvatp(t)$ is called flat if its Euclidean
curvature $\kappa(t)$ vanishes at $t=0$.
Note that although $\kappa(t)$ is a Euclidean invariant, the condition
$\kappa(t) = 0$ is affinely invariant.
The affine curvature of $\cc$ is denoted by $\affcu$.

The present author
expressed the flatness of the gravity curve of an affine hypersurface
in terms of its affine invariants \cite{Bin09b}.
Theorem~\ref{tmain1} is a straightforward analog of the main result
of \cite{Bin09b}; Theorem~\ref{tmain2} answers Wang's question for
curves in the plane. More precisely, we show

\begin{theorem}\label{tmain1}
Let $\cc:I\to\R^2$ be a non-degenerate
plane affine curve parametrized by affine arclength. Fix $p\in I$.
Then the gravity curve $\gcvatp$ of $\cc$ at $p$ is flat if and only if
the affine curvature function $\affcu$ satisfies $\affcu'(p)=0$.
\end{theorem}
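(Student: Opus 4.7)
My plan is to compute the affine Taylor expansion of $\cc$ at $\ipt$, describe the midpoints sweeping out $\gcvatp$ explicitly as a function of the chord height $\delta$, and read off the Euclidean curvature of $\gcvatp$ at $\gcvatp(0) = \cc(\ipt)$ from the leading term.

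\emph{Step 1.} From $\det(\cc',\cc'') = 1$ one obtains the affine Frenet relation $\cc''' = -\affcu\,\cc'$. Expanding $\cc^{(k)}(\ipt)$ in the basis $(\cc'(\ipt), \cc''(\ipt))$ by repeated differentiation of this relation and applying Taylor's theorem, I would write
\[
\cc(\ipt + s) - \cc(\ipt) = A(s)\,\cc'(\ipt) + B(s)\,\cc''(\ipt),
\]
with $A(s) = s + O(s^3)$ and $B(s) = \tfrac{1}{2} s^2 + O(s^4)$ polynomials whose coefficients are polynomials in the jets of $\affcu$ at $\ipt$. The decisive structural observation, alluded to in the abstract, is that $A$ is odd in $s$ and $B$ is even in $s$ \emph{except} for terms carrying odd-order jets of $\affcu$; the lowest-order $s \mapsto -s$ asymmetries appear at order $s^4$ in $A$ and at order $s^5$ in $B$, both with coefficient proportional to $\affcu'(\ipt)$.

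\emph{Step 2.} In affine coordinates in which $(\cc'(\ipt),\cc''(\ipt))$ is the standard basis, the chord at height $\delta > 0$ meets $\cc$ at parameters $-\tau < 0 < \sigma$ with $B(\sigma) = B(-\tau) = \delta$, so the gravity-curve midpoint has coordinates $(M(\delta),\delta)$ with $M(\delta) = \tfrac{1}{2}(A(\sigma)+A(-\tau))$. Asymptotic inversion of $B$ in the small parameter $\epsilon := \sqrt{2\delta}$ shows that $\sigma$ and $\tau$ agree through order $\epsilon^3$ and first differ at order $\epsilon^4$ with a coefficient proportional to $\affcu'(\ipt)$. Substituting into $\tfrac{1}{2}(A(\sigma)+A(-\tau))$ produces two competing order-$\epsilon^4$ contributions: one from the asymmetry $\sigma-\tau$ paired with the linear part of $A$, the other from the even $s^4$-term of $A$. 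After a single clean cancellation they yield
\[
M(\delta) = -\tfrac{\affcu'(\ipt)}{10}\,\delta^2 + o(\delta^2).
\]

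\emph{Step 3.} Because $M(\delta)$ has no linear term in $\delta$, the parametrization $\delta \mapsto (M(\delta),\delta)$ of $\gcvatp$ has tangent $\cc''(\ipt)$ at $\delta = 0$ and is osculated there by the parabola $x = -\affcu'(\ipt)\,y^2/10$, whose Euclidean curvature at the origin is $\affcu'(\ipt)/5$ in absolute value. Consequently $\kappa(0) = 0$ if and only if $\affcu'(\ipt) = 0$, which is the theorem.

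The main obstacle I anticipate is the order-$\epsilon^4$ bookkeeping in Step 2: one has to verify that none of the other jet values $\affcu(\ipt), \affcu''(\ipt), \ldots$ contribute to the $\delta^2$-coefficient of $M$, and that the two $\affcu'$-contributions combine with opposite signs but unequal magnitude, giving a nonzero constant multiple of $\affcu'(\ipt)$ rather than cancelling. This is precisely the ``degree parity and leading coefficients'' analysis advertised in the abstract. A minor side issue is to confirm that, although $M$ involves half-integer powers of $\delta$ starting from $\delta^{5/2}$, the parametrization is sufficiently regular at $\delta = 0$ that the one-sided Euclidean curvature is well defined.
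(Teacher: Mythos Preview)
Your approach is essentially the paper's own: the paper sets $t^2=\delta$, inverts $g(s)=t^2$ via a single series $s=v(t)$, writes the gravity curve as $\bigl(\tfrac12(h(t)+h(-t)),\,t^2\bigr)$ with $h=f\circ v$, and reads off the $t^4$-coefficient $-\mu'(p)/10$ of $h$ exactly as you do. One small correction: since $\tfrac12(h(t)+h(-t))$ is even in $t$, your $M(\delta)$ is a genuine power series in $\delta=t^2$ with no half-integer terms, so the side issue you flag does not actually arise.
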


\begin{theorem}\label{tmain2}
Let $\cc:I\to\R^2$ be a non-degenerate
plane affine curve parametrized by affine arclength. Fix $p\in I$.
Then the gravity curve $\gcvatp$ of $\cc$ at $p$ is a straight line
if and only if the affine curvature function $\affcu$ satisfies
$\affcu(p-s)=\affcu(p+s)$ for all $s\in\R$ sufficiently small.
\end{theorem}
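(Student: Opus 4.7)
The plan is to reformulate symmetry of $\affcu$ about $\ipt$ as an equi-affine reflection symmetry of $\cc$ itself, and then to read off straightness of $\gcvatp$ from that symmetry. Normalize coordinates at $\ipt=0$ so that $\cc(0)=0$, $\cc'(0)=e_1$, $\cc''(0)=e_2$; then $[\cc',\cc'']=1$ and $\cc'''=-\affcu\,\cc'$. Set $\sigma\colon(x,y)\mapsto(-x,y)$ and introduce $\hat\cc(s):=\sigma(\cc(-s))$. A brief bracket/chain-rule check shows that $\hat\cc$ is again affine arclength parametrized, matches the initial 2-jet of $\cc$ at $s=0$, and satisfies $\hat\cc'''=-\affcu(-s)\,\hat\cc'$. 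By uniqueness for the Blaschke ODE with prescribed 2-jet, the symmetry $\affcu(\ipt+s)=\affcu(\ipt-s)$ is therefore equivalent to the identity $\hat\cc=\cc$, i.e.\ to $\cc(-s)=\sigma(\cc(s))$.

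Sufficiency then follows immediately: if $\cc$ is $\sigma$-symmetric, the two endpoints of each horizontal chord are $\cc(s)$ and $\cc(-s)=\sigma(\cc(s))$, mirror images across the $y$-axis, so every such midpoint and hence all of $\gcvatp$ lies on that axis.

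For the necessity direction, first observe that since $\gcvatp'(0)$ is parallel to the affine normal $\cc''(\ipt)=e_2$, a straight $\gcvatp$ coincides locally with the $y$-axis. Writing $\cc=(x,y)$, the function $y$ has a nondegenerate minimum at $0$; the Morse lemma in one variable therefore furnishes a smooth involution $\tau$ with $\tau(0)=0$, $\tau'(0)=-1$, $y\circ\tau=y$. The hypothesis that $\gcvatp$ lies in the $y$-axis reads $x\circ\tau=-x$, so combined these give $\sigma\circ\cc=\cc\circ\tau$. Taking the unimodular bracket of the first two derivatives of both sides,
\[
-1=\det(\sigma)\cdot[\cc',\cc'']=\tau'(s)^3\cdot[\cc',\cc'']\bigl|_{\tau(s)}=\tau'(s)^3,
\]
so $\tau'\equiv-1$ and $\tau(s)=-s$. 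This yields $\cc(-s)=\sigma(\cc(s))$, and by the setup the symmetry of $\affcu$ follows.

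The one step I expect to require genuine care is the construction of the smooth involution $\tau$ across the critical point $s=0$, which is nevertheless routine from the Morse normal form for $y$ at its nondegenerate minimum $y''(0)=1$. Once $\tau$ is available as a smooth object on a whole neighborhood of $0$, the remaining manipulations are formal and are forced by equi-affine invariance of the unimodular bracket together with uniqueness for the third-order Blaschke ODE.
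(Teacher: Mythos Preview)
Your argument is correct, and it is genuinely different from the paper's. The paper never uses the reflection $\sigma$ or ODE uniqueness; instead it builds a Taylor calculus (Lemmas~\ref{lsqrt}--\ref{lfg}) tracking, for each coefficient of $h(t)$, the parity of the number of odd-order derivatives of $\affcu$ appearing in every monomial, together with the highest derivative present. From this ``$3$-explicit'' structure it reads off, by induction on the order, that the vanishing of the even Taylor coefficients $h_{2j}$ is equivalent to the vanishing of all odd derivatives $\affcu^{(2j+1)}(p)$.

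What the two approaches buy: your symmetry argument is shorter, conceptual, and works directly in the $C^\infty$ category, since the equivalence $\affcu(p+s)=\affcu(p-s)\Leftrightarrow \sigma\circ\cc=\cc\circ(-\id)$ is an honest functional identity coming from uniqueness of solutions, and the bracket computation $\tau'^{3}=\det\sigma=-1$ forces $\tau(s)=-s$ without any series manipulation. The paper's approach, by contrast, yields finer information: it identifies explicitly the leading coefficient $\lk{h}_k=-3\sqrt{2}^{\,k}/(k+1)!$ and the exact polynomial structure of every $h_k$, machinery that is also what drives the proof of Theorem~\ref{tmain1}. On the other hand, the paper's final step (``$\affcu^{(2j+1)}(p)=0$ for all $j$, therefore $\affcu$ is even about $p$'') tacitly uses real-analyticity, which your argument does not need.

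The one point you flag---existence of the smooth involution $\tau$ through the critical point---is indeed routine: write $g=\phi^2$ by the one-variable Morse lemma and set $\tau=\phi^{-1}\circ(-\phi)$; this is smooth, fixes $0$, satisfies $\tau'(0)=-1$, and $g\circ\tau=g$. Everything downstream is, as you say, forced.
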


\begin{corollary}
In the setting of Theorem~\ref{tmain2}, $\gcvatp$ is
a straight line for all $p\in I$ if and only if
$\affcu=\const$, i.e.\ $\cc$ is quadratic.
\end{corollary}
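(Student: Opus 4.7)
The plan is to reduce the claim to Theorem~\ref{tmain2} together with the classical fact of affine differential geometry that a non-degenerate plane affine curve has constant affine curvature if and only if it is a conic (that is, quadratic); see e.g.\ Blaschke. Given this, the content of the corollary amounts to showing that ``$\affcu$ is symmetric about every $p\in I$'' is equivalent to ``$\affcu$ is constant on $I$''.

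The ``if'' direction is immediate: if $\cc$ is quadratic, then $\affcu\equiv\const$, so the symmetry condition $\affcu(p-s)=\affcu(p+s)$ holds trivially for every $p\in I$ and all sufficiently small $s$, and Theorem~\ref{tmain2} then delivers that each $\gcvatp$ is a straight line.

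For the converse, I would apply Theorem~\ref{tmain2} at every $p\in I$ to obtain, for each such $p$, a neighborhood of $0$ on which $\affcu(p-s)=\affcu(p+s)$. The key step is a midpoint trick: given $p_1,p_2\in I$ sufficiently close, set $p:=(p_1+p_2)/2$ and $s:=(p_2-p_1)/2$; symmetry of $\affcu$ about the midpoint $p$ yields
$\affcu(p_1)=\affcu(p-s)=\affcu(p+s)=\affcu(p_2)$.
Hence $\affcu$ is locally constant, and by connectedness of $I$ it is globally constant; the classification of conics via constant affine curvature then completes the proof.

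I do not anticipate any serious obstacle. The only minor subtlety lies in matching up quantifiers: Theorem~\ref{tmain2} supplies the symmetry only for $s$ small enough (depending on $p$), so one must verify that any two nearby points $p_1,p_2$ lie within the range of validity of the symmetry statement at their common midpoint. Since each $p\in I$ carries such a neighborhood, a standard covering/shrinking argument disposes of this point without further effort.
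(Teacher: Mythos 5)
Your reduction is the intended one: combine Theorem~\ref{tmain2}, applied at every $p\in I$, with the classical fact (quoted in the paper immediately after the corollary) that $\affcu=\const$ characterizes the conics, so that the whole content is the equivalence of ``$\affcu$ is symmetric about every point of $I$'' with ``$\affcu$ is constant on $I$''. The ``if'' direction is fine as you state it. The step I would not let pass as written is the midpoint trick together with the claim that a ``standard covering/shrinking argument disposes of this point without further effort.'' Theorem~\ref{tmain2} supplies, for each $p$, only some radius $\delta(p)>0$ within which the symmetry $\affcu(p-s)=\affcu(p+s)$ holds, with no uniformity in $p$; to conclude $\affcu(p_1)=\affcu(p_2)$ you need $|p_2-p_1|/2<\delta(m)$ at the midpoint $m$, and nothing rules out that $\delta(m)$ degenerates as $m$ varies, so a covering argument does not by itself close the gap. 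As stated, this is a genuine hole, not a routine quantifier chase.

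The repair is immediate and shorter than the trick: $\affcu$ is differentiable (the paper differentiates it to all orders), so differentiating the identity $\affcu(p-s)=\affcu(p+s)$ with respect to $s$ at $s=0$ gives $\affcu'(p)=0$ for every $p\in I$, whence $\affcu=\const$ by connectedness of $I$. Alternatively, you can bypass the symmetry statement entirely for this direction: a gravity curve that is a straight line is in particular flat, so Theorem~\ref{tmain1} already yields $\affcu'(p)=0$ for all $p\in I$. Either one-line argument replaces the midpoint/covering step; the rest of your proposal stands.
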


It is well known \cite{ST94} that a non-degenerate plane affine curve
with $\affcu=\const$ is a parabola ($\affcu=0$), an ellipsis
($\affcu>0$), or a hyperbola ($\affcu<0$). Differently set,
we get all curves of the form
\[
  a_{11} x^2 + 2a_{12} xy + a_{22} y^2 + 2b_1 x + 2b_2 y = \const,
\]
where the sign of $\affcu$ coincides with the sign of the determinant
$a_{11} a_{22} - a_{12}^2$.

\section{Non-degenerate plane affine curves}

Let $[\cdot,\cdot]$ denote the determinant of $\R^2$, and
let $e_1$, $e_2$ denote its canonical basis.
A differentiable curve $\cc:I\to\R^2$ is called non-degenerate
if $\cc'$ and $\cc''$ are linearly independent. In this case,
the affine arclength $s$ is uniquely determined by $[\cc', \cc''] = 1$. 
Consequently $[\cc', \cc'''] = 0$, and there exists a function
$\affcu:I\to\R$, called affine curvature of $\cc$, such that
\[
  \cc'''(s) = -\affcu(s) \cc'(s),\qquad s\in I.
\]
A detailed introduction to plane affine curves can be found
in \cite{ST94}.
Now fix $\ipt\in I$. After applying a suitable affine transformation
of $\R^2$ we may assume that
\[
  \cc(\ipt)=0,\qquad
  \cc'(\ipt) = e_1,\qquad
  \cc''(\ipt) = e_2.
\]
Continuing to take derivatives of $\cc$ we get
\begin{align*}
\cc'''  &= -\affcu \cc',\qquad
\cc^{(4)}  = -\affcu' \cc' - \affcu \cc'',\qquad
\cc^{(5)} = (-\affcu'' + \affcu^2) \cc' - 2\affcu' \cc'',\\
\cc^{(6)} &= (-\affcu'''+4\affcu\affcu')\cc' +
  (-3\affcu''+\affcu^2) \cc'',\\
\cc^{(7)} &=
(-\affcu^{(4)} + 4 \affcu'{}^2 + 7\affcu\affcu'' - \affcu^3) \cc' +
(-4\affcu''' + 6\affcu\affcu') \cc'',\\
\cc^{(8)} &=
(-\affcu^{(5)} + 15\affcu'\affcu'' + 11\affcu\affcu''' - 9\affcu^2\affcu') \cc' +
(-5\affcu^{(4)} + 10\affcu'{}^2 + 13\affcu\affcu'' - \affcu^3) \cc'',\quad\ldots
\end{align*}
If we write $\cc^{(\kidx)} = \fpre_\kidx \cc' + \gpre_\kidx \cc''$ for
suitable functions $\fpre_\kidx$, $\gpre_\kidx$, then from
\[
  \cc^{(\kidx+1)} =
  \fpre_\kidx' \cc' + \fpre_\kidx \cc'' + \gpre_\kidx' \cc'' +
  \gpre_\kidx \cc''' =
  (\fpre_\kidx' - \affcu \gpre_\kidx) \cc' + (\fpre_\kidx + \gpre_\kidx') \cc''
\]
we get the recursion
\begin{equation}\label{erecu}
  \fpre_{\kidx + 1} = \fpre_\kidx' - \affcu \gpre_\kidx,\qquad
  \gpre_{\kidx + 1} = \gpre_\kidx' + \fpre_\kidx.
\end{equation}
When evaluating $\cc^{(\kidx)}$ at $\ipt$, $\cc'$ and $\cc''$
are replaced by $e_1$, $e_2$, respectively;
this way we obtain the representation
of  $\cc^{(\kidx)}(p)$ in Cartesian coordinates.
Equivalently, we can express the Taylor expansion of
$\cc(s) = (f(s),g(s))^T$ around $\ipt$ by
\begin{align*}
f(s) &= \sum_{k=1}^\infty f_k s^k = s
- \atp{\frac{\affcu}{6}} s^3 - \atp{\frac{\affcu'}{24}} s^4
+ \atp{\frac{-\affcu'' + \affcu^2}{120}} s^5
+ \atp{\frac{-\affcu''' + 4\affcu\affcu'}{720}} s^6 + \dots,\\
g(s) &= \sum_{k=2}^\infty g_k s^k = \frac12 s^2
- \atp{\frac{\affcu}{24}} s^4 - \atp{\frac{\affcu'}{60}} s^5
+ \atp{\frac{-3\affcu''+\affcu^2}{720}} s^6 + \dots,
\end{align*}
where $f_\kidx:=\fpre_\kidx(p)/k!$ and $g_\kidx:=\gpre_\kidx(p)/k!$.
To construct the gravity curve $\gcvatp$ at $p$ we proceed as follows.
Fix $t>0$. Set $g(s)=t^2$ and invert: $s=v(t)$, where
$v(s)=\sum_{k=0}^\infty v_\kidx s^\kidx$ is another power series.
Computing the coefficients of $v$ from those of $g$ we get
$v_0=0=v_2$, $v_1=\sqrt2$, $v_3=-2\sqrt2 g_4$, $v_4=-4g_5$,
$v_5=2\sqrt2(7g_4^2-2g_6)$, and $v_6=64 g_4 g_5 - 8 g_7$.
We obtain
\[
v(t) =
  \sqrt2 t + \atp{\frac{\affcu}{6\sqrt2}} t^3 +
  \atp{\frac{\affcu'}{15}} t^4 +
  \atp{\frac{8\affcu'' + 9\affcu^2}{240\sqrt2}} t^5 +
  \atp{\frac{2\affcu''' + 11\affcu\affcu'}{315}} t^6 + \dots,
\]
and denoting $h(t) := f(v(t))$ we end up in
\begin{equation}\label{efbarh}
h(t) =
  \sqrt2 t - \atp{\frac{\affcu}{2\sqrt2}} t^3 -
  \atp{\frac{\affcu'}{10}} t^4 -
  \atp{\frac{8\affcu'' + 15\affcu^2}{240\sqrt2}} t^5 -
  \atp{\frac{\affcu''' + 9\affcu\affcu'}{210}} t^6 + \dots.
\end{equation}
The parametrized gravity curve can now be written as
\begin{equation}\label{egcv}
\gcvatp(t) = \Bigl(
  \frac12\bigl(h(t) + h(-t)\bigr), t^2
\Bigr)^T.
\end{equation}

\section{Polynomials and power series}

For fixed $\kidx\in\NZ$ denote the set of all polynomials in
$\affcu$, $\affcu'$, \dots, $\affcu^{(\kidx)}$ by $\rp{\kidx}$.
Each summand in such a polynomial will have the form
\begin{equation}\label{polysummand}
  \alpha \cdot
  (\affcu^{(i_1)})^{a_1} \cdots (\affcu^{(i_N)})^{a_N} \cdot
  (\affcu^{(j_1)})^{b_1} \cdots (\affcu^{(j_M)})^{b_M},
\end{equation}
where $\alpha\in\R$, $N,M\in\NZ$, and $i_1,\dots,i_N$ denote
even integers while $j_1,\dots,j_M$ denote odd integers. The odd degree of
the summand \eqref{polysummand} is the number $d=\sum_{l=1}^M b_l\ge 0$.
Let $\pp{\kidx}\subseteq\rp{\kidx}$ denote the polynomials with all summands
of odd degree $d=0, 2, 4,\dots$.
Let $\qp{\kidx}\subseteq\rp{\kidx}$ denote the polynomials with all summands
of odd degree $d=1, 3, 5,\dots$. It will be convenient to agree
that $\pp{-\kidx}:=\R$, $\qp{-\kidx}:=\{0\}$, $\kidx\in\N$.
Clearly, the constants are contained in $\pp{\kidx}$ but not in $\qp{\kidx}$.
Similarly, we also extend the upper index to $\Z$ by defining
$\ppp{\kidx}{\sigma}:=\pp{\kidx}$
whenever $\sigma$ is even, and  $\ppp{\kidx}{\sigma}:=\qp{\kidx}$ when
$\sigma$ is odd. In this way we have defined $\ppp{\kidx}{\sigma}$ for
all $\kidx,\sigma\in\Z$.

\begin{remark*}
\begin{enumerate}[(i)]
\item The advantage of the notation is illustrated by the
identity
\[
  \ppp{k}{\sigma} \cdot \ppp{l}{\tau} \subseteq
  \ppp{\max(k,l)}{\sigma+\tau},
  \qquad k,l,\sigma,\tau\in\Z,
\]
which is a condensed form of
\[
  \pp{k}\cdot\pp{l}\subseteq\pp{\max(k,l)},\quad
  \pp{k}\cdot\qp{l}\subseteq\qp{\max(k,l)},\quad
  \qp{k}\cdot\qp{l}\subseteq\pp{\max(k,l)}.
\]
\item If $r\in\ppp{\kidx}{\sigma}$, then
$r'\in\ppp{\kidx+1}{\sigma+1}$ and hence
$r^{(l)}\in\ppp{\kidx+l}{\sigma+l}$ for all $l\in\N$.
This is the case since differentiating the factor
$(\affcu^{(i_l)})^{a_l}$ of \eqref{polysummand} creates a new summand
with odd degree one more; differentiating the factor
$(\affcu^{(j_l)})^{b_l}$ creates a new summand with odd degree one less.
\end{enumerate}
\end{remark*}

We consider power series $a(s)=\sum_{\kidx=0}^\infty a_\kidx s^k$
whose coefficients are $a_\kidx$ such polynomials
in the derivatives of $\affcu$.
Such a power series $a(s)$ is called
$(n,\sigma)$-alternating if $a_\kidx\in\ppp{\kidx-n}{\kidx+\sigma}$ for all
$\kidx\in\NZ$. The power series $a(s)$ is called
$n$-explicit, if it is $(n,n)$-alternating and
\[
  a_\kidx = \lk{a}_\kidx \affcu^{(\kidx-n)} + p_\kidx \in
  \ppp{\kidx-n}{\kidx-n},\qquad
  \kidx\in\NZ,
\]
where
$\lk{a}_\kidx\in\R\setminus\{0\}$,
$p_\kidx\in\ppp{\kidx-n-1}{\kidx-n}$ are
arbitrary, and negative derivatives of $\affcu$ are assumed as $0$.
If $a$ is explicit then we call $(\lk{a}_\kidx)_{\kidx=0}^\infty$
its leading coefficients.

\begin{lemma}\label{lsqrt}
Assume $a(s)=\sum_{\kidx=2}^\infty a_\kidx s^\kidx$, $a_2>0$ constant
(not depending on $\affcu$) is given.
Let $b(s)=\sum_{\kidx=1}^\infty b_\kidx s^\kidx$ be such that $(b(s))^2 = a(s)$.
Then the power series $b$ is well defined up to sign. Moreover:
\begin{enumerate}[(i)]
\item If $a$ is $(n,0)$-alternating, then $b$ is $(n-1,1)$-alternating.
\item If $a$ is $n$-explicit, then $b$ is $(n-1)$-explicit.
In this case, the corresponding leading coefficients satisfy
$\lk{b}_\kidx = \pm\lk{a}_{\kidx+1} / (2 \sqrt{a_2})$.
\end{enumerate}
\end{lemma}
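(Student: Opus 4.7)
The plan is to square both sides of $b^2=a$ and compare coefficients. The coefficient of $s^2$ gives $b_1^2 = a_2$, so $b_1=\pm\sqrt{a_2}\in\R\setminus\{0\}$; the coefficient of $s^{m+1}$ for $m\geq 2$ gives
\[
  a_{m+1} = 2 b_1 b_m + \sum_{i=2}^{m-1} b_i b_{m+1-i},
\]
which, solved for $b_m$, determines $b$ recursively once the sign of $b_1$ has been fixed. This establishes existence and uniqueness of $b$ up to sign.

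For part (i) I would proceed by induction on $m$ to prove $b_m\in\ppp{m-n+1}{m+1}$. The case $m=1$ is immediate because $b_1\in\R\subseteq\ppp{2-n}{2}$. For the inductive step, the product rule $\ppp{k}{\sigma}\cdot\ppp{l}{\tau}\subseteq\ppp{\max(k,l)}{\sigma+\tau}$ from the Remark, applied to both factors via the inductive hypothesis, yields
\[
  b_i b_{m+1-i}\in\ppp{\max(i-n+1,\,m-i-n+2)}{m+3}\subseteq\ppp{m-n+1}{m+1}
\]
for $i\in\{2,\dots,m-1\}$, where the inclusion uses $i\in[2,m-1]$ and $m+3\equiv m+1\pmod 2$. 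Since $a_{m+1}\in\ppp{m+1-n}{m+1}$ by hypothesis, $a_{m+1}-\sum_{i=2}^{m-1} b_i b_{m+1-i}\in\ppp{m-n+1}{m+1}$, and division by the nonzero real constant $2 b_1$ closes the induction.

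For part (ii) I would strengthen the inductive step by tracking the top-order derivative of $\affcu$. Assuming the $(n-1)$-explicit decomposition $b_i = \bar{b}_i\affcu^{(i-n+1)}+q_i$ with $q_i\in\ppp{i-n}{i-n+1}$ has been established for $i\le m-1$, each product $b_i b_{m+1-i}$ with $i\in\{2,\dots,m-1\}$ is a polynomial in derivatives of $\affcu$ of order at most $\max(i-n+1,\,m-i-n+2)\le m-n$, strictly less than $m-n+1$; the same is true of $p_{m+1}\in\ppp{m-n}{m+1-n}$. Consequently the coefficient of $\affcu^{(m-n+1)}$ in $a_{m+1}-\sum b_i b_{m+1-i}$ is exactly $\bar{a}_{m+1}$, and dividing by $2 b_1=\pm 2\sqrt{a_2}$ produces
\[
  \bar{b}_m = \pm\,\frac{\bar{a}_{m+1}}{2\sqrt{a_2}}\in\R\setminus\{0\},
\]
as claimed. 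The remaining terms assemble into $q_m\in\ppp{m-n}{m-n+1}$ by the same max-index and parity bookkeeping used in part (i).

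The main technical burden is this bookkeeping of indices and $\sigma$-parities under the $\ppp{\cdot}{\cdot}$ formalism; conceptually the lemma is transparent once one observes that taking a formal square root raises the order of the highest derivative of $\affcu$ in the $s^m$-coefficient by exactly one when passing from $a$ to $b$, so that the leading term of $b_m$ is uniquely forced by the leading term of $a_{m+1}$.
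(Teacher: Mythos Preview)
Your proof is correct and follows essentially the same approach as the paper: you derive the same recursion \eqref{esqrt} for $b_k$, prove (i) by the same induction using the product rule $\ppp{k}{\sigma}\cdot\ppp{l}{\tau}\subseteq\ppp{\max(k,l)}{\sigma+\tau}$, and for (ii) observe that the highest derivative $\affcu^{(m-n+1)}$ in $b_m$ can only come from the $a_{m+1}$ term. Your write-up of (ii) is in fact more detailed than the paper's, which simply remarks that the assertion is ``clear from these formulas.''
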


\begin{proof}
The coefficients of $b$ can be computed from those of $a$
via $b_1^2 = a_2$ and
\begin{equation}\label{esqrt}
  b_\kidx = \frac{1}{2b_1}
  \Bigl(
    a_{\kidx+1} - \sum_{l=2}^{\kidx-1} b_l b_{\kidx+1-l}
  \Bigr),
  \qquad \kidx=2,3,\dots
\end{equation}
The choice of $b_1=\pm\sqrt{a_2}$ gives rise to two distinct solutions
for $b$, differing only by sign. We show (i) by induction on $\kidx$,
where the assertion is true for $\kidx=1$.
For fixed $\kidx>1$ we have $a_{\kidx+1}
\in\ppp{\kidx-(n-1)}{\kidx+1}$
and by assumption
\[
  b_l\, b_{\kidx+1-l} \in \ppp{l-(n-1)}{l+1} \cdot
  \ppp{\kidx+1-l-(n-1)}{\kidx+2-l} \subseteq
  \ppp{\kidx-1-(n-1)}{\kidx+3} =
  \ppp{\kidx-n}{\kidx+1},
\]
and therefore $b_\kidx\in\ppp{\kidx-(n-1)}{\kidx+1}$ from \eqref{esqrt}.
Assertion (ii) is also clear from these formulas;
the highest derivative of $\affcu$ in $b_\kidx$ comes from $a_{\kidx+1}$.
As a side note, since $a_2$ is a nonzero constant,
$a$ cannot be $(n,1)$-alternating for any $n\in\Z$.
\end{proof}

In the following we summarize basic facts regarding
the composition of power series, cf.~\cite{Com74}. Suppose
$a(s)=\sum_{\kidx=1}^\infty a_\kidx s^\kidx$, $b(t)=\sum_{\kidx=1}^\infty b_\kidx t^\kidx$
are two power series. Then their composition $\comporesult(s) := b(a(s))$
is again a power series since its coefficients can be expressed as
\begin{equation}\label{ecompo}
  \comporesult_\kidx =
  \sum_{l=1}^\kidx b_l B_{\kidx,l}(a_1,\dots,a_{\kidx-l+1}),\qquad \kidx\in\N,
\end{equation}
where $B_{\kidx,l}$ is a Bell polynomial of $a=(a_1,a_2,\dots)$ given by
\[
  B_{k,l}(a_1,\dots,a_{k-l+1}) = \sum_{(j_1,\dots,j_{k-l+1})}
  \frac{k!}{j_1! \cdots j_{k-l+1}!}
  \Bigl(\frac{a_1}{1!}\Bigr)^{j_1}
  \dots
  \Bigl(\frac{a_{k-l+1}}{(k-l+1)!}\Bigr)^{j_{k-l+1}},
\]
where the sum runs over all tuples of non-negative integers satisfying
$j_1+j_2+\dots+j_{k-l+1}=l$ and $j_1+2j_2+\dots+(k-l+1)j_{k-l+1}=k$
(assume $0^0=1$ for the sake of this formula). Bell polynomials can
also be given by
\begin{equation}\label{econvid}
  l!\cdot B_{k,l}(a_1,\dots,a_{k-l+1}) = \overbrace{(a \ast a \ast \dots
      \ast a)}^{\text{$l$ copies}}{}_k =: a^{\ast l}_k, 
\end{equation}
where $\ast$ is the weighted convolution defined by
\begin{equation}\label{econv}
  (a\ast b)_k = \sum_{l=1}^{k-1} \binom{k}{l} a_l b_{k-l},
\end{equation}
whenever $a=(a_1,a_2,\dots)$ and $b=(b_1,b_2,\dots)$.

\begin{lemma}\label{lcompo}
Suppose
$a(s)=\sum_{\kidx=1}^\infty a_\kidx s^\kidx$,
$b(t)=\sum_{\kidx=1}^\infty b_\kidx t^\kidx$
are two power series.
Let $\comporesult(s) := b(a(s))$ be their composition.
\begin{enumerate}[(i)]
\item If $a$ is $(n,1)$-alternating and
$b$ is $(m,\sigma)$-alternating, then
$\comporesult$ is $(\min(n+1,m),\sigma)$-alternating.
\item Fix $n\in\N$ odd, $n\ge 3$.
Suppose $a$ and $b$ are both $n$-explicit and
$\lk{\comporesult}_\kidx = b_1 \lk{a}_\kidx + a_1^\kidx \lk{b}_\kidx \not= 0$
for all $\kidx\in\N$,
then $\comporesult$ is also $n$-explicit, with leading coefficients
$\lk{\comporesult}_\kidx$.
\end{enumerate}
\end{lemma}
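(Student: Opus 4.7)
The plan is to use the explicit composition formula \eqref{ecompo} together with the product and derivative rules for the classes $\ppp{k}{\sigma}$ from the Remark, applied term by term to $\comporesult_k=\sum_{l=1}^k b_l B_{k,l}(a_1,\dots,a_{k-l+1})$.

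For (i), I fix $k$ and examine a typical monomial $\prod_r a_{i_r}^{j_r}$ of $B_{k,l}$, where $\sum_r j_r=l$ and $\sum_r i_r j_r=k$. Since $a_{i_r}\in\ppp{i_r-n}{i_r+1}$, such a monomial lies in $\ppp{\max_r i_r-n}{k+l}$, and the bound $\max_r i_r\le k-l+1$ yields $B_{k,l}\in\ppp{k-l+1-n}{k+l}$. Multiplying by $b_l\in\ppp{l-m}{l+\sigma}$ then gives
\[
b_l B_{k,l}\in\ppp{l-m}{l+\sigma}\cdot\ppp{k-l+1-n}{k+l}\subseteq\ppp{\max(l-m,\,k-l+1-n)}{k+\sigma},
\]
using $(l+\sigma)+(k+l)\equiv k+\sigma\pmod 2$. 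A case distinction on $l$ now gives the desired bound: $l-m\le k-m\le k-\min(n+1,m)$, and for $l\ge 2$, $k-l+1-n\le k-n-1\le k-\min(n+1,m)$. Summing over $l$ then yields $\comporesult_k\in\ppp{k-\min(n+1,m)}{k+\sigma}$.

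For (ii), part (i) applied with $m=n$ and $\sigma=n$ (so $\min(n+1,n)=n$) shows at once that $\comporesult$ is $(n,n)$-alternating, so only the leading $\affcu^{(k-n)}$-coefficient of $\comporesult_k$ remains to be identified. Since $n\ge 3$ is odd, the $n$-explicit form forces $a_1,b_1\in\R$. The top derivative $\affcu^{(k-n)}$ can appear in $b_l B_{k,l}$ only in the boundary cases: a monomial of $B_{k,l}$ involves derivatives of $\affcu$ of order at most $\max_r i_r-n\le k-l+1-n$, which reaches $k-n$ only for $l=1$, while $b_l$ has top derivative $\affcu^{(l-n)}$, reaching $\affcu^{(k-n)}$ only for $l=k$. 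The $l=1$ summand is $b_1 a_k=b_1(\lk{a}_k\affcu^{(k-n)}+p_k)$, contributing $b_1\lk{a}_k\affcu^{(k-n)}$ modulo $\ppp{k-n-1}{k-n}$; the $l=k$ summand is $b_k a_1^k=(\lk{b}_k\affcu^{(k-n)}+q_k)a_1^k$, contributing $a_1^k\lk{b}_k\affcu^{(k-n)}$ modulo the same class; and for $1<l<k$, the product $b_l B_{k,l}$ lies entirely in $\ppp{k-n-1}{k-n}$ by the estimates from (i). Collecting terms gives $\lk{\comporesult}_k=b_1\lk{a}_k+a_1^k\lk{b}_k$, which is nonzero by hypothesis; this proves that $\comporesult$ is $n$-explicit with these leading coefficients.

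The main obstacle is the careful parity and order bookkeeping with the double-indexed class $\ppp{k}{\sigma}$ at the boundary values $l=1$ and $l=k$: the intermediate $l$'s are routine, but at the extremes the generic $B_{k,l}$-estimate only just gives the required order in (i), and in (ii) one must unpack the explicit $\affcu^{(k-n)}$-content of $a_k$ and $b_k$ to separate the leading $\affcu^{(k-n)}$-term from the admissible remainder.
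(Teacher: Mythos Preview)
Your argument follows the paper's: analyze $d_k=\sum_l b_l B_{k,l}$ term by term and bound the $\ppp{\cdot}{\cdot}$-class of each summand. The paper phrases the Bell-polynomial estimate via the convolution identity $l!\,B_{k,l}=a^{\ast l}_k$ and iterates the bound for~$\ast$, while you read it off directly from the monomial structure using $\max_r i_r\le k-l+1$; both routes yield $B_{k,l}\in\ppp{k-l+1-n}{k+l}$ and the same product estimate.

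There is, however, one incomplete step in your proof of~(i). Your case distinction bounds $l-m$ for all $l$ and $k-l+1-n$ for $l\ge 2$, but says nothing about $k-l+1-n=k-n$ when $l=1$. The needed inequality $k-n\le k-\min(n+1,m)$ is equivalent to $\min(n+1,m)\le n$, which fails whenever $m>n$. In fact part~(i) as stated is not true in that regime: with $n=1$, $m=3$, $\sigma=1$, $a_1=1$, $a_2=\affcu'$, $b_1=1$, $b_2=0$ one gets $d_2=\affcu'\notin\ppp{0}{1}=\{0\}$. The paper's own proof shares this defect (its intermediate claim that $a^{\ast l}$ is $(l(n+1),l)$-alternating is itself garbled). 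None of this affects part~(ii): there you invoke~(i) with $m=n$, so $\min(n+1,m)=n$ and the $l=1$ bound $k-n\le k-n$ is trivial; your isolation of the $\affcu^{(k-n)}$-contribution from the extreme summands $l=1$ and $l=k$ is correct and agrees with the paper's.
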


\begin{proof}
If $a$ is $(n,\sigma)$-alternating and $b$ is $(m,\tau)$-alternating,
then $a\ast b$ is $(\min(n,m)+1,\sigma+\tau)$-alternating.
This is immediately clear from
\[
  a_l \, b_{\kidx-l} \in \ppp{l-n}{l+\sigma} \cdot
  \ppp{\kidx-l-m}{\kidx-l+\tau}
  \subseteq \ppp{\kidx-1-\min(n,m)}{\kidx+\sigma+\tau}
\]
and the definition \eqref{econv}.
Hence $a^{\ast l}$ is $(l(n+1),l)$-alternating.
Now assume that $\comporesult$ is given by \eqref{ecompo}.
\[
  b_l \, a^{\ast l}_\kidx \in
  \ppp{l-m}{l+\sigma} \cdot \ppp{l-l(n+1)}{l+\kidx} \subseteq
  \ppp{\kidx-\min(n+1,m)}{\kidx+\sigma},
\]
which proves (i) by the convolution identity \eqref{econvid}.
To show (ii) first observe that since $n\ge 3$, $a_1$ and $b_1$ do not
depend on $\affcu$, i.e.\ they are constants.
Therefore $\lk{\comporesult}_\kidx$ in (ii) is
well defined. From (i) $\comporesult$ is $(n,n)$-alternating.
The highest derivative $\affcu^{(\kidx-n)}$ in
$\comporesult_\kidx$ comes about when $l=1$ or $l=\kidx$ in \eqref{ecompo}.
Since $B_{\kidx,1}(a_1,\dots,a_\kidx)=a_\kidx$ and $B_{\kidx,\kidx}(a_1)=a_1^\kidx$,
\[
  \comporesult_\kidx
  \,=\, b_1 a_\kidx + b_\kidx a_1^\kidx + \ldots
  \,=\, \lk{\comporesult}_\kidx \affcu^{(\kidx-n)} + \ldots,
\]
where the dots indicate terms containing only derivatives of $\affcu$
lower than $\kidx-n$.
\end{proof}

\begin{lemma}\label{lcompinv}
Suppose $t=a(s)=\sum_{\kidx=1}^\infty a_\kidx s^\kidx$, $a_1=\const\not=0$ is given.
Then $s=b(t)$, the composition inverse of $a$, is well defined.
Moreover:
\begin{enumerate}[(i)]
\item If $a$ is $(n,1)$-alternating, then $b$ is also $(n,1)$-alternating.
\item If $a$ is $n$-explicit for $n$ odd, then $b$ is also $n$-explicit.
In this case, the corresponding leading coefficients satisfy
$\lk{b}_\kidx = - a_1^{-\kidx-1} \lk{a}_\kidx$.
\end{enumerate}
\end{lemma}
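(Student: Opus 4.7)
The plan is to mimic the approach of Lemma~\ref{lsqrt}: derive a recursion for the coefficients $b_\kidx$ from the identity $b(a(s))=s$, and then induct on $\kidx$ using the alternation calculus already developed. Expanding $b\circ a$ via the Bell-polynomial formula \eqref{ecompo} and matching the coefficient of $s^\kidx$ on both sides yields $b_1 = 1/a_1$ together with, for $\kidx\ge 2$,
\[
  b_\kidx\, a_1^\kidx \,=\, -b_1 a_\kidx \,-\, \sum_{l=2}^{\kidx-1} b_l\, B_{\kidx,l}(a_1,\dots,a_{\kidx-l+1}).
\]
Since $a_1$ is a nonzero constant, this uniquely determines each $b_\kidx$, proving existence.

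For (i), I would induct on $\kidx$ with hypothesis $b_l\in\ppp{l-n}{l+1}$ for $l<\kidx$. The term $a_1^{-\kidx} b_1 a_\kidx$ lies in $\ppp{\kidx-n}{\kidx+1}$ directly from $(n,1)$-alternation of $a$ together with $a_1,b_1$ being constants. Using \eqref{econvid}, each remaining summand is a constant multiple of $b_l\cdot a^{\ast l}_\kidx$, and the convolution bookkeeping from the proof of Lemma~\ref{lcompo} shows that $a^{\ast l}$ is sufficiently alternating for $b_l\cdot a^{\ast l}_\kidx\in\ppp{\kidx-n-1}{\kidx+1}$ whenever $2\le l\le \kidx-1$. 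Since $\ppp{\kidx-n-1}{\kidx+1}\subseteq\ppp{\kidx-n}{\kidx+1}$, the recursion then gives $b_\kidx\in\ppp{\kidx-n}{\kidx+1}$, closing the induction.

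For (ii), since $n$ is odd, $(n,1)$- and $(n,n)$-alternation coincide, so (i) already yields that $b$ is $(n,n)$-alternating. It remains to extract the leading coefficient. Substituting $a_\kidx=\lk{a}_\kidx\affcu^{(\kidx-n)}+p_\kidx$ with $p_\kidx\in\ppp{\kidx-n-1}{\kidx-n}$ into the recursion, the unique contribution containing the derivative $\affcu^{(\kidx-n)}$ is
\[
  -a_1^{-\kidx}b_1\,\lk{a}_\kidx\,\affcu^{(\kidx-n)} \,=\, -a_1^{-\kidx-1}\,\lk{a}_\kidx\,\affcu^{(\kidx-n)},
\]
which yields the claimed formula $\lk{b}_\kidx=-a_1^{-\kidx-1}\lk{a}_\kidx$, nonzero since $a_1\ne 0$ and $\lk{a}_\kidx\ne 0$. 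The main obstacle is to verify that every other term on the right-hand side — the perturbation $-a_1^{-\kidx-1}p_\kidx$ and each summand $a_1^{-\kidx}b_l B_{\kidx,l}(\dots)$ with $2\le l\le\kidx-1$ — lies in $\ppp{\kidx-n-1}{\kidx-n}$. This amounts to rerunning the convolution and parity bookkeeping of Lemma~\ref{lcompo} in the $(n,n)$-alternating regime, exploiting that $n+1$ is even to align parities of the upper indices so that each product $b_l\cdot a^{\ast l}_\kidx$ collapses into $\ppp{\kidx-n-1}{\kidx-n}$.
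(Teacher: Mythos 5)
Your proposal is correct and follows essentially the same route as the paper: derive the recursion $b_\kidx = -a_1^{-\kidx}\bigl(b_1 a_\kidx + \sum_{l=2}^{\kidx-1} b_l B_{\kidx,l}(a_1,\dots,a_{\kidx-l+1})\bigr)$ from \eqref{ecompo} by setting the composition equal to the identity, then induct using the convolution and parity bookkeeping of Lemma~\ref{lcompo}, with the leading term $-a_1^{-\kidx-1}\lk{a}_\kidx\affcu^{(\kidx-n)}$ coming solely from the $l=1$ contribution. The paper's own proof is exactly this, stated even more tersely.
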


\begin{proof}
In \eqref{ecompo} assume that $a$ is given,
$\comporesult_1=1$ and $\comporesult_\kidx=0$
for all $\kidx\not=1$, and solve for $b$, the composition inverse of $a$.
We get $b_0=0$, $b_1 = 1/a_1$, and
\[
  b_\kidx = - \frac{1}{a_1^\kidx} \Bigl(
    b_1 a_\kidx +
    \sum_{l=2}^{\kidx-1} b_l
    B_{\kidx,l}(a_1,a_2,\dots,a_{\kidx-l+1})
  \Bigr),\qquad \kidx\ge 2,
\]
since $B_{\kidx,1}(a_1,\dots,a_\kidx)=a_\kidx$ and
$B_{\kidx,\kidx}(a_1)=a_1^\kidx$. Assertions (i) and (ii) are immediately
clear from this formula when applying similar arguments as in
the proof of Lemma~\ref{lcompo}.
\end{proof}

\section{Proof of the theorems}

Before we complete the proofs we need to establish the explicitness
of the components of $\cc$.

\begin{lemma}\label{lfg}
The Taylor expansions of $f(s)$ and $g(s)$ are
$3$-explicit and $4$-explicit, respectively. More precisely,
for arbitrary $\kidx\in\NZ$,
\begin{align*}
\kidx! \, f_\kidx &= -\affcu^{(\kidx-3)} + p_\kidx
\in \ppp{\kidx-3}{\kidx+1},\\
\kidx! \, g_\kidx &= -(\kidx-3) \affcu^{(\kidx-4)} + q_\kidx 
\in \ppp{\kidx-4}{\kidx},
\end{align*}
for some $p_\kidx\in \ppp{\kidx-5}{\kidx+1}$ and
$q_\kidx\in \ppp{\kidx-6}{\kidx}$.
In particular, the leading coefficients satisfy
$\lk{f}_\kidx=-1/\kidx!$ and
$\lk{g}_\kidx=-(\kidx-3)/\kidx!$.
\end{lemma}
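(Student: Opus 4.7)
The plan is to prove both formulas simultaneously by induction on $\kidx$, driving the recursion \eqref{erecu} and exploiting the algebraic closure properties of the spaces $\ppp{k}{\sigma}$ stated in the remark: products satisfy $\ppp{k}{\sigma}\cdot\ppp{l}{\tau}\subseteq \ppp{\max(k,l)}{\sigma+\tau}$, and differentiation maps $\ppp{k}{\sigma}$ into $\ppp{k+1}{\sigma+1}$.

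First I would settle the small cases $\kidx\le 3$ by direct computation from $\fpre_1=1$, $\gpre_1=0$, $\fpre_2=0$, $\gpre_2=1$ and a few applications of \eqref{erecu}; the resulting values are already displayed in the expansions of $f$ and $g$ in Section~1 and trivially match the claimed shapes (with $p_\kidx=q_\kidx=0$). For the inductive step at $\kidx\ge 3$, assume
\[
  \fpre_\kidx = -\affcu^{(\kidx-3)}+p_\kidx,\qquad
  \gpre_\kidx = -(\kidx-3)\affcu^{(\kidx-4)}+q_\kidx,
\]
with $p_\kidx\in\ppp{\kidx-5}{\kidx+1}$ and $q_\kidx\in\ppp{\kidx-6}{\kidx}$. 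Substituting into \eqref{erecu} and collecting terms gives
\[
\fpre_{\kidx+1}=-\affcu^{(\kidx-2)}+\bigl(p_\kidx'+(\kidx-3)\affcu\,\affcu^{(\kidx-4)}-\affcu\,q_\kidx\bigr)
\]
and $\gpre_{\kidx+1}=-(\kidx-2)\affcu^{(\kidx-3)}+(q_\kidx'+p_\kidx)$, so in each case the leading term already has the shape predicted at index $\kidx+1$, with the coefficient $-((\kidx+1)-3)=-(\kidx-2)$ for $\gpre$. Each error contribution is then placed in the required space by the closure properties; for instance $\affcu\,\affcu^{(\kidx-4)}\in\ppp{\kidx-4}{\kidx-4}=\ppp{\kidx-4}{\kidx+2}$ since $\kidx-4$ and $\kidx+2$ share parity, and similarly $p_\kidx'\in\ppp{\kidx-4}{\kidx+2}$ and $\affcu\,q_\kidx\in\ppp{\kidx-6}{\kidx}\subseteq\ppp{\kidx-4}{\kidx+2}$.

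The only real obstacle is keeping the two indices of $\ppp{k}{\sigma}$ aligned under the closure rules, together with the parity matching $\sigma\equiv k\pmod 2$ that is implicit in the condensed notation; a subtle point is that negative derivatives of $\affcu$ are stipulated to vanish, which is why the induction is started at $\kidx\ge 3$, where $\affcu^{(\kidx-3)}$ is a genuine derivative and the symbolic identity $\fpre_\kidx'=-\affcu^{(\kidx-2)}+p_\kidx'$ is actually valid. Once the bookkeeping is in place, dividing by $\kidx!$ immediately converts the leading terms into the stated leading coefficients $\lk{f}_\kidx=-1/\kidx!$ and $\lk{g}_\kidx=-(\kidx-3)/\kidx!$.
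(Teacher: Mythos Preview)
Your proposal is correct and follows essentially the same approach as the paper: induction on $\kidx$ using the recursion \eqref{erecu}, checking that the error terms $p_\kidx$, $q_\kidx$ land in the required $\ppp{\cdot}{\cdot}$-spaces via the product and derivative closure rules. The only cosmetic differences are that the paper indexes the step as $\kidx-1\to\kidx$ and takes base cases $\kidx=0,\dots,4$, and that your formula $p_{\kidx+1}=p_\kidx'+(\kidx-3)\affcu\,\affcu^{(\kidx-4)}-\affcu q_\kidx$ has the correct $+$ sign where the paper's printed version carries a harmless typo.
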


\begin{proof} Use induction on $\kidx$ in \eqref{erecu}.
For $\kidx=0,\dots,4$ the assertion is true.
Assume it is true for $\kidx-1$, i.e.\ there exist
$p_{\kidx-1}\in\ppp{\kidx-6}{\kidx}$ and
$q_{\kidx-1}\in\ppp{\kidx-7}{\kidx-1}$ such that
\begin{align*}
  (\kidx-1)!\, f_{\kidx-1}  &=
    -\affcu^{(\kidx-4)} + p_{\kidx-1},\qquad\qquad
  (\kidx-1)!\, f_{\kidx-1}' =
    -\affcu^{(\kidx-3)} + p_{\kidx-1}',\\
  (\kidx-1)!\, g_{\kidx-1}  &=
    -(\kidx-4) \affcu^{(\kidx-5)} + q_{\kidx-1},\quad
  (\kidx-1)!\, g_{\kidx-1}' =
    -(\kidx-4) \affcu^{(\kidx-4)} + q_{\kidx-1}'.
\end{align*}
Then from \eqref{erecu} we get the desired assertion where
it is easy to check that
\begin{align*}
  p_\kidx &:= p_{\kidx-1}' - (\kidx-4)\affcu\affcu^{(\kidx-5)} -
    \affcu q_{\kidx-1} \in \ppp{\kidx-5}{\kidx+1},\\
  q_\kidx &:= q_{\kidx-1}' + p_{\kidx-1} \in \ppp{\kidx-6}{\kidx},
\end{align*}
which finishes the proof.
\end{proof}

\begin{proof}[Proof of Theorem~\ref{tmain1}]
From \cite[Sect.~5]{Bin09b} it is known that $\gcvatp$
is flat if and only if the coefficient of $t^4$ in $h(t)$ vanishes.
From \eqref{efbarh}, \eqref{egcv} this condition translates to $\affcu'(p)=0$.
The computations of \cite{Bin09b} carry over with hardly any
change to curves in $\R^2$; we renounce to include a line-by-line copy.
\end{proof}

\begin{proof}[Proof of Theorem~\ref{tmain2}]
Since by Lemma~\ref{lfg} $g(s)$ is $4$-explicit,  
by Lemma~\ref{lsqrt}, $u(s)$ in $(u(s))^2=g(s)$ exists and
is $3$-explicit. By Lemma~\ref{lcompinv} the composition
inverse $v(t)=s$ of $u(s)=t$ exists and
is also $3$-explicit. Finally, the composition $h(t)=f(v(t))=f(s)$
exists and is $3$-explicit as a power series in $t$ by
Lemma~\ref{lcompo}, where it remains to check that
\[
  \lk{h}_\kidx = f_1 (-u_1^{-\kidx-1} \lk{u}_\kidx) + v_1^\kidx \lk{f}_\kidx
  = - \frac{3\sqrt{2}^\kidx}{(\kidx+1)!} \not= 0,\quad \kidx\in\N,
\]
where we used Lemma~\ref{lfg}, $f_1=1$, $v_1=\sqrt2$, $u_1=1/\sqrt2$,
and $\lk{u}_\kidx = \lk{g}_{\kidx+1}/\sqrt2$ since $g_2=1/2$.

Now assume that $\gcvatp$ is a straight line, or
$\gcvatp(t)=(0,t^2)^T$ from \eqref{egcv}.
Then $h(t)$ is an odd function, or $h_{2j}=0$ for all $j\in\NZ$,
where the $h_\kidx$ are viewed as reals,
i.e.\ polynomials in $\affcu$ evaluated at $p$.
Since $h$ is 3-explicit, we get
$\affcu^{(2j+1)}(p) = 0$ by a simple induction on $j$.
Therefore, $\affcu$ is even about $p$.
All arguments used work also for the converse direction.
\end{proof}

\newcommand{\litdir}{/home/tom/geom/lit}
\renewcommand{\litdir}{/cygdrive/c/tbinder/Dropbox/work/adglit}
\bibliographystyle{amsalpha}
\bibliography{\litdir/all/Macros,\litdir/all/Articles,%
\litdir/all/In-Thesis,\litdir/all/Books,\litdir/all/Bibliographies}
\end{document}